\definecolor{linkcolor0}{rgb}{0.85, 0.15, 0.15}
\definecolor{linkcolor1}{rgb}{0.15, 0.15, 0.85}
\numberwithin{equation}{section}
\newtheorem{theorem}{Theorem}[section]
\newtheorem*{theorem*}{Theorem}
\newtheorem{proposition}[theorem]{Proposition}
\newtheorem{lemma}[theorem]{Lemma}
\newtheorem{corollary}[theorem]{Corollary}
\theoremstyle{definition}
\newtheorem{definition}[theorem]{Definition}
\newtheorem{example}[theorem]{Example}
\newtheorem{convention}[theorem]{Convention}
\theoremstyle{remark}
\newtheorem{remark}[theorem]{Remark}
\newtheorem*{ack}{Acknowledgments}
\def\R{\mathbb R}
\def\C{\mathbb C}
\def\Q{\mathbb Q}
\def\wt#1{\widetilde{#1}}
\def\cV{\mathcal{V}}
\def\cW{\mathcal{W}}
\def\ol#1{\overline{#1}}
\DeclareMathOperator\lk{lk}
\DeclareMathOperator\re{Re}
\DeclareMathOperator\sign{sign}
\title{Limits of the Tristram--Levine signature function}
\author{Maciej Borodzik}
\address{Institute of Mathematics, University of Warsaw, ul. Banacha 2,
02-097 Warsaw, Poland}
\email{mcboro@mimuw.edu.pl}
\author{Jakub Zarzycki}
\address{Institute of Mathematics, University of Warsaw, ul. Banacha 2,
02-097 Warsaw, Poland}
\email{jk.zarzycki@student.uw.edu.pl}
\subjclass[2010]{primary: 57M25 } 
\keywords{linking number, Tristram--Levine signature}
\begin{document}

\begin{abstract}
We show that under a precise condition on the single variable Alexander polynomial, the limit at $1$ of the
Tristram--Levine signature of a link is determined by the linking matrix.
\end{abstract}

\maketitle

\section{Introduction}
Let $L= L_1\cup\dots\cup L_r$ be a link. Recall that the Tristram--Levine signature of $L$
is defined as:
\begin{equation}\label{eq:sigma}
\sigma_L(z)=\sign((1-z)S+(1-\ol{z})S^T)),
\end{equation}
where $S$ is a Seifert matrix for $L$, $z$ is a complex number with modulus $1$, and $\sign$ denotes the signature of a hermitian matrix. 
Levine--Tristram signature was introduced by Tristram \cite{Tristram} and Levine \cite{Levine_sig} as a generalization of the Murasugi signature \cite{Murasugi}. Levine--Tristram signature is an important link invariant, used e.g. to obstruct concordance of links.

By \eqref{eq:sigma}, the value of $\sigma_L(1)$ is zero. Nevertheless, if $r>1$, the limit
\[\sigma^1:=\lim_{\substack{t\to 0\\t\neq 0}}\sigma_L(e^{it})\]
can be non-zero.
Surprisingly, not much is known about the topological interpretation of $\sigma^1$. In \cite[Theorem 2.1]{GilmerLivingston} it is proved
that $|\sigma^1|\le r-1$; see also \cite[Remark 2.2]{Conway_survey}. This statement generalizes the well-known fact
that $\sigma^1=0$ if $L$ is a knot.

Before we state the main theorem, recall that
the \emph{linking matrix} of an $r$-component link $L$ is an
$r\times r$ matrix $A_L=\{a_{ij}\}$, where
\begin{equation}\label{eq:aij} a_{ij} = \begin{cases}
    \lk(L_i, L_j) &\textrm{if} \quad i \neq j \\
    -\lk(L_i, L \setminus L_i) & \textrm{otherwise.}
\end{cases}
\end{equation}
The following is the main theorem of this short article.
\begin{theorem}\label{thm:main}
  Let $L$ be an $r$-component link.
Let $\Delta_L(t)$ be the single variable Alexander polynomial. Suppose that $\Delta_L(t)$ is non-zero. 
If $(t-1)^r$ does not divide $\Delta_L(t)$, then $\sigma^1$ is the signature of the linking matrix of $L$ 
\end{theorem}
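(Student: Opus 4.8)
The plan is to analyze the Hermitian family $H(t) := (1-e^{it})S + (1-e^{-it})S^T$ as $t\to 0$ by isolating its degeneration at $t=0$. First I would rewrite it exactly, with no approximation, as $H(t) = (1-\cos t)\,P + \sin t\,B$, where $P := S + S^T$ is real symmetric and $B := i(S^T - S)$ is Hermitian. Using $1-\cos t = 2\sin^2(t/2)$ and $\sin t = 2\sin(t/2)\cos(t/2)$ and factoring out the positive scalar $2\sin(t/2)$ (for $t\in(0,\pi)$), one finds $\sign H(t) = \sign\big(\cos(t/2)\,B + \sin(t/2)\,P\big) = \sign(B + sP)$ with $s = \tan(t/2)\to 0^+$. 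Since $H(e^{-it}) = \ol{H(e^{it})}$, the signature is even in $t$, so the two-sided limit coincides with the one-sided one, and everything reduces to computing $\sigma^1 = \lim_{s\to 0^+}\sign(B + sP)$.

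Two structural facts drive the computation. As $S^T - S$ is real skew-symmetric, its nonzero eigenvalues come in pairs $\pm i b_k$, so $B$ has real spectrum symmetric about $0$ and $\sign B = 0$; moreover $\Ker B = \Ker(S - S^T)$ is the radical of the intersection form on a connected Seifert surface $F$. With $r$ boundary circles this radical is spanned by the boundary classes $[\partial_1],\dots,[\partial_r]$ (subject to $\sum_i[\partial_i]=0$) and has dimension $r-1$. I would then invoke the elementary perturbation lemma: if $B$ is Hermitian and $P|_{\Ker B}$ is nondegenerate, then $\lim_{s\to 0^+}\sign(B + sP) = \sign B + \sign\big(P|_{\Ker B}\big)$, the extra term arising from the first-order splitting of the zero eigenvalues along the eigenvalues of the compression of $P$ to $\Ker B$. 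As $\sign B = 0$, this yields $\sigma^1 = \sign(P|_V)$, where $V := \Ker B$.

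It remains to (i) guarantee the nondegeneracy hypothesis and (ii) identify $\sign(P|_V)$ with $\sign A_L$. For (i) I would compare orders of vanishing: from $H(z) = (1-z)(S - z^{-1}S^T)$ one gets $\det H(e^{it}) \doteq (1-e^{it})^n\,\Delta_L(e^{it})$ up to units, so $\mathrm{ord}_{t=0}\det H = n + \mathrm{ord}_{t=1}\Delta_L$; meanwhile $\det H(t) = (2\sin(t/2))^n\cos^n(t/2)\det(B + sP)$ gives $\mathrm{ord}_{t=0}\det H = n + \mathrm{ord}_{s=0}\det(B+sP)$. Since $B$ has exactly $r-1$ zero eigenvalues, $\det(B + sP)$ vanishes at $s=0$ to order $\ge r-1$, with equality precisely when $P|_V$ is nondegenerate. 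Hence $\mathrm{ord}_{t=1}\Delta_L \ge r-1$ always, and the hypothesis $(t-1)^r\nmid\Delta_L$ forces equality, which is exactly nondegeneracy of $P|_V$. For (ii) I would compute the Seifert form on boundary classes: $(S+S^T)([\partial_i],[\partial_j]) = \lk(\partial_i^+,\partial_j)+\lk(\partial_i^-,\partial_j) = 2\lk(L_i,L_j)=2a_{ij}$ for $i\ne j$, while the diagonal entry $\lk(\partial_i^+,\partial_i)+\lk(\partial_i^-,\partial_i)$ equals twice the Seifert framing $-\lk(L_i, L\setminus L_i)$, i.e. $2a_{ii}$. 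Thus $P$ on the boundary classes is $2A_L$; since the row sums of $A_L$ vanish, $(1,\dots,1)\in\Ker A_L$ and $V$ is the quotient by this line, so $\sign(P|_V) = \sign A_L$, completing the argument.

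The main obstacle is the perturbation step combined with the nondegeneracy dichotomy of (i): one must make precise that the $r-1$ eigenvalues emerging from $\Ker B$ genuinely split to first order with signs governed by $P|_V$ (so that the higher-order terms of $H(t)$ are irrelevant), and at the same time that this very nondegeneracy is exactly what the order of vanishing of $\Delta_L$ at $1$ detects. Fixing conventions and signs in the boundary-framing computation of (ii) is routine but must be handled with care so as to land on the linking matrix itself rather than a rescaling of it.
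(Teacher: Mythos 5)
Your proposal is correct, but it takes a genuinely different route from the paper. The paper's proof runs through N\'emethi's hermitian variation structures: it first uses Keef's theorem to replace the Seifert matrix by an S-equivalent invertible one, decomposes the associated HVS into basic summands, translates the hypothesis $(t-1)^r\nmid\Delta_L$ into the vanishing of the Hodge numbers $p^k_1(u)$ for $k>1$, and then reads off the limit of the signature function from the classification (via the cited computation that only the summands $\cW^1_1(u)$ contribute at $z=1$). You bypass all of this machinery: your factorization $\sign H(t)=\sign(B+sP)$ with $s=\tan(t/2)$, the first-order eigenvalue-splitting lemma, and the Schur-complement/order-of-vanishing dichotomy replace, respectively, the HVS decomposition, the signature computation for basic structures, and the paper's Lemma on the equivalence of conditions (i)--(iii); moreover you work directly with the geometric Seifert matrix, so you never need Keef's result or the S-equivalence invariance arguments (the paper's Lemma~\ref{lem:preserve} and Corollary~\ref{cor:final}). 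The geometric endgame is the same in both proofs: $\ker(S-S^T)$ is spanned by the boundary classes, and $S+S^T$ there is (twice) the linking matrix $A_L$, whose signature survives passing to the quotient by $(1,\dots,1)$ --- this is exactly the content of the paper's Lemmas~\ref{lem:small} and~\ref{lem:small_seifert}. What each approach buys: yours is elementary and self-contained, and gives the inequality $\mathrm{ord}_{t=1}\Delta_L\ge r-1$ as a byproduct; the paper's heavier formalism organizes the higher-order degenerations (Jordan blocks $\cW^k_1(u)$ with $k>1$), which is precisely what one needs to treat the excluded cases $(t-1)^r\mid\Delta_L$ or $\Delta_L\equiv 0$ (as flagged in the paper's remark), where your first-order perturbation analysis stops short. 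The two technical points you flag as needing care --- the degenerate perturbation lemma (Rellich-type analyticity of eigenvalue branches, with derivatives at $s=0$ the eigenvalues of the compression of $P$ to $\ker B$) and the fact that the coefficient of $s^{r-1}$ in $\det(B+sP)$ is $\det B_1\cdot\det(P|_{\ker B})$ --- are both standard and correct as you state them, so the gaps are of exposition rather than substance.
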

\begin{remark}
  Our definition of the linking matrix is different than the one in LinkInfo \cite{linkinfo}, where the linking matrix is supposed
  to have zeros on the diagonal. Our definition is maybe less natural, but has a more geometric meaning; see Lemma~\ref{lem:small_seifert} below. 
  Our convention is used e.g. in connection with Hosokawa theorem \cite{Hosokawa,Turaev}.
\end{remark}

Theorem~\ref{thm:main} is proved in Section~\ref{sec:proof_of_main} after we give preparatory results in Sections~\ref{sec:linking} -- \ref{sec:eigen}. Section~\ref{sec:examples} contains several examples. 
\begin{remark}
  The methods of the present paper can be used to generalize Theorem~\ref{thm:main} to the case when $\Delta_L\equiv 0$
  or if $(t-1)^r$ divides $\Delta_L(t)$, leading to inequalities for $|\sigma^1-\sign A_L|$. These statements are more technical
  than Theorem~\ref{thm:main} and the proof is therefore, they are not discussed in the present paper.
\end{remark}

\begin{convention}
  Throughout the paper, we use the notation $\lim_{z\to 1}f(z)$ for $\lim_{\substack{z\to 1\\ |z|=1, z\neq 1}} f(z)$.
\end{convention}
\begin{ack}
  The authors would like to thank Chuck Livingston for fruitful discussions and to Anthony Conway and Wojciech Politarczyk for careful reading of the preliminary version of the manuscript.
  The first author is supported by the Polish Center of Science grant
  OPUS 2019/35/B/ST1/01120. 
\end{ack}
\section{Linking matrix}\label{sec:linking}

We introduce a useful terminology.
\begin{definition}
  A \emph{small linking  matrix} $H_L$ for $L$ is an $(r-1)\times (r-1)$
  principal minor of $A_L$.
\end{definition}
\begin{lemma}\label{lem:small}
  The congruence class of $H_L$ is well-defined, that is, does not depend on the particular choice of a minor. The signature
  of $H_L$ is equal to the signature of $A_L$. The dimension of the
  kernel of $H_L$ is one less than the dimension of $A_L$.
\end{lemma}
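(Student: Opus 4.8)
The plan is to exploit the single structural feature of the linking matrix that makes the statement work. By \eqref{eq:aij} the matrix $A_L$ is symmetric, since $\lk(L_i,L_j)=\lk(L_j,L_i)$, and each of its rows sums to zero, because $a_{ii}=-\lk(L_i,L\setminus L_i)=-\sum_{j\neq i}a_{ij}$. Hence the all-ones vector $\mathbf{1}=(1,\dots,1)^T$ lies in $\ker A_L$, and in fact $\mathbf{1}$ is in the radical of the associated symmetric bilinear form $\langle x,y\rangle=x^TA_Ly$ on $V=\mathbb{Z}^r$ (using symmetry, $\mathbf{1}^TA_L=(A_L\mathbf{1})^T=0$ as well). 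Everything else is linear algebra over $\mathbb{Z}$.

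First I would record a block decomposition. Fix an index $k$ and let $V_k=\langle e_i: i\neq k\rangle$ be the coordinate sublattice. Since $\mathbf{1}$ has a nonzero $k$-th coordinate, $\{e_i: i\neq k\}\cup\{\mathbf{1}\}$ is a $\mathbb{Z}$-basis of $V$; let $P_k\in GL_r(\mathbb{Z})$ be the corresponding change-of-basis matrix. Using $A_L\mathbf{1}=0$ and $\mathbf{1}^TA_L=0$ one computes
\[
P_k^TA_LP_k=\begin{pmatrix}H^{(k)}&0\\0&0\end{pmatrix},
\]
where $H^{(k)}$ is the principal submatrix of $A_L$ obtained by deleting the $k$-th row and column, i.e.\ a small linking matrix. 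Since $P_k$ is unimodular, $A_L$ and this block matrix are congruent; comparing signatures gives $\sign A_L=\sign H^{(k)}$ (the second statement), and comparing ranks gives $\dim\ker A_L=\dim\ker H^{(k)}+1$ (the third statement).

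For the well-definedness of the congruence class I would argue through the quotient. Because $\mathbf{1}$ is in the radical, the form descends to a well-defined symmetric form $\overline{\langle\cdot,\cdot\rangle}$ on the quotient lattice $V/\langle\mathbf{1}\rangle\cong\mathbb{Z}^{r-1}$. For each $k$ the composite $V_k\hookrightarrow V\twoheadrightarrow V/\langle\mathbf{1}\rangle$ is an isomorphism of lattices carrying the restriction of $\langle\cdot,\cdot\rangle$ to $V_k$---namely $H^{(k)}$---isometrically onto $\overline{\langle\cdot,\cdot\rangle}$. Hence for two indices $k,l$ the composite $V_k\xrightarrow{\sim}V/\langle\mathbf{1}\rangle\xrightarrow{\sim}V_l$ is a unimodular isometry, and the matrix $U\in GL_{r-1}(\mathbb{Z})$ representing it satisfies $U^TH^{(l)}U=H^{(k)}$. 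Thus all small linking matrices represent one and the same form on $V/\langle\mathbf{1}\rangle$ and are mutually congruent over $\mathbb{Z}$.

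The only real point to watch is the $\mathbb{Z}$-level bookkeeping. The block computation alone gives $\mathrm{diag}(H^{(k)},0)\cong\mathrm{diag}(H^{(l)},0)$, from which one cannot directly cancel the zero summand over $\mathbb{Z}$; this is exactly why I route the well-definedness through the canonical quotient form rather than through the block normal form. The signature and kernel-dimension claims, by contrast, are insensitive to this subtlety and follow immediately from Sylvester's law of inertia applied to the block decomposition.
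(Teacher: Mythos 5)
Your proof is correct, and its core coincides with the paper's: both arguments rest on the observation that the rows and columns of $A_L$ sum to zero, so that $\mathbf{1}=(1,\dots,1)^T$ lies in the radical of the form. The paper is terser: it writes the last row and column of $A_L$ as negative sums of the others and invokes the general fact that deleting a linearly dependent row together with the corresponding column of a symmetric matrix changes neither the signature nor the rank --- which is precisely what your explicit unimodular congruence $P_k^T A_L P_k = \left(\begin{smallmatrix} H^{(k)} & 0 \\ 0 & 0\end{smallmatrix}\right)$ establishes, so for the signature and kernel statements you are giving a fleshed-out version of the same argument. Where you genuinely diverge is on well-definedness of the congruence class: the paper's proof only yields equality of signatures and ranks among the various principal minors, which determines the congruence class over $\R$ by Sylvester's law of inertia but not over $\Z$; your quotient-lattice argument produces an explicit unimodular $U$ with $U^T H^{(l)} U = H^{(k)}$, so the small linking matrices are congruent even integrally. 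Your closing caveat is also well taken: from $\operatorname{diag}(H^{(k)},0)\cong\operatorname{diag}(H^{(l)},0)$ one cannot naively cancel the zero block over $\Z$, and routing through the canonical form on $V/\langle\mathbf{1}\rangle$ is the correct way around this. The extra strength is not needed for the paper's purposes (only signatures enter the proof of Theorem~\ref{thm:main}), but it is a cleaner and more robust formulation of Lemma~\ref{lem:small}.
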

\begin{proof}
  With the notation of \eqref{eq:aij}, we have $\sum_i a_{ij}=0$ and $\sum_j a_{ij}=0$. Therefore, the linking matrix is of the form
  $$ A_L = \begin{pmatrix}
  a_{1, 1} & a_{1, 2} &  \dots & a_{1, r-1} & -\sum_j a_{1, j} \\
  a_{2, 1} & a_{2, 2} & \dots & a_{2, r-1} & -\sum_j a_{2, j} \\
  \vdots \\
  -\sum_i a_{i, 1} & -\sum_i a_{i, 2} & \dots & -\sum_i a_{i, r-1} & \sum_{i, j} a_{i, j} \\
  \end{pmatrix}.$$

  With this formula, any row is a linear combination of other rows. Removing a linearly dependent row and the same column from a symmetric
  matrix does not change the signature and the rank. 
\end{proof}

A small linking matrix has a geometric interpretation.
\begin{lemma}\label{lem:small_seifert}
  Let $\Sigma$ be a Seifert surface for $L$ and let
  $H$ be the image of the inclusion induced map $H_1(\partial\Sigma;\C)\to H_1(\Sigma;\C)$. Then, the Seifert form restricted to $H$ is congruent to a small matrix $H_L$. In particular the Seifert form restricted to $H$ is symmetric.
\end{lemma}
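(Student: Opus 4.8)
The plan is to write the restricted Seifert form explicitly in a basis of $H$ coming from the boundary components of $\Sigma$ and then to recognize the resulting matrix as a principal minor of $A_L$. Write $\partial\Sigma = L_1\cup\dots\cup L_r$, so that $H_1(\partial\Sigma;\C)$ is spanned by the fundamental classes $[L_1],\dots,[L_r]$. First I would pin down $H$ itself. Since $\partial\Sigma$ bounds $\Sigma$, the class $[L_1]+\dots+[L_r]$ dies under the inclusion into $H_1(\Sigma;\C)$; running the long exact sequence of the pair $(\Sigma,\partial\Sigma)$ and using $H_2(\Sigma,\partial\Sigma;\C)\cong\C$ for $\Sigma$ connected (with the fundamental class mapping to $\sum_i[L_i]$) shows that this is the only relation, so the kernel of $H_1(\partial\Sigma;\C)\to H_1(\Sigma;\C)$ is one-dimensional. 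Hence $\dim H=r-1$ and the images of $[L_1],\dots,[L_{r-1}]$ form a basis.

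Next I would compute the Seifert form $V(a,b)=\lk(a^+,b)$ on this basis, where $a^+$ is the positive pushoff off $\Sigma$. For $i\neq j$ the components $L_i$ and $L_j$ are disjoint, and a small pushoff of $L_i$ stays isotopic to $L_i$ in the complement of $L_j$, so $V([L_i],[L_j])=\lk(L_i,L_j)=a_{ij}$. For the diagonal I would avoid computing the surface framing directly and instead exploit the relation from the first step: since $\sum_j[L_j]=0$ in $H_1(\Sigma;\C)$ and $V$ is linear in its second argument, $\sum_j V([L_i],[L_j])=0$, whence $V([L_i],[L_i])=-\sum_{j\neq i}\lk(L_i,L_j)=-\lk(L_i,L\sm L_i)=a_{ii}$. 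Thus in the chosen basis the matrix of $V|_H$ is exactly $(a_{ij})_{1\le i,j\le r-1}$, the upper-left $(r-1)\times(r-1)$ principal minor of $A_L$, i.e.\ a small linking matrix $H_L$; congruence to any other small matrix then follows from Lemma~\ref{lem:small}. Symmetry is automatic, since $a_{ij}=a_{ji}$; equivalently it reflects that distinct boundary curves have vanishing mutual intersection on $\Sigma$, so $H$ is isotropic for $V-V^T$.

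The one point requiring care is the identification $\dim H=r-1$ together with the claim that $\sum_i[L_i]=0$ is the only relation, as this is where connectedness of $\Sigma$ enters; if $\Sigma$ is disconnected one argues componentwise. A secondary technical point is fixing orientation conventions so that the off-diagonal pairing is genuinely $\lk(L_i,L_j)$ rather than its negative. Once that sign is settled, the homological relation $\sum_j[L_j]=0$ forces the diagonal to agree with $a_{ii}$ with the correct sign, and the statement follows.
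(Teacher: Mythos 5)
Your proof is correct and takes essentially the same route as the paper's: identify $H$ as the span of the classes $[L_1],\dots,[L_r]$ subject to $\sum_j[L_j]=0$, compute the off-diagonal entries $V([L_i],[L_j])=\lk(L_i,L_j)$ from disjointness of the components, and recover the diagonal entries by bilinearity from the relation $\sum_j[L_j]=0$. The only difference is that you justify $\dim H=r-1$ (i.e.\ that $[L_1],\dots,[L_{r-1}]$ are linearly independent) via the long exact sequence of the pair $(\Sigma,\partial\Sigma)$, a point the paper's proof leaves implicit when it declares these classes a basis of $H$.
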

\begin{proof}
  Recall that $L_1,\dots,L_r$ denote the components of the link.
  Denote by  $[L_1],\dots,[L_{r}]$ the classes they represent in
  $H_1(\Sigma;\C)$. These classes span the space $H$, and are subject to the relation $[L_1]+\dots+[L_r]=0$.

  As $L_i$ and $L_j$ for $i\neq j$ are disjoint, the Seifert form $S([L_i],[L_j])$ is equal to the linking number
  of $L_i$ and $L_j$. Due to the relation $[L_1]+\dots+[L_r]=0$, we infer that
  $S([L_i],[L_i])=-\lk(L_i,L_1\cup\dots\cup L_{i-1}\cup\dots\cup L_r)$.

  To conclude the proof, we choose a basis $[L_1],\dots,[L_{r-1}]$ of $H$. In this basis $S$ is represented by the $(r-1)\times (r-1)$ minor
  of the linking matrix $A_L$.
\end{proof}
\begin{example}\label{ex:two_component}
  Suppose $L$ is a two component link. If $\ell$ denotes the linking number of the two components, then the linking matrix is $\begin{pmatrix} -\ell & \ell \\ \ell & -\ell \end{pmatrix}$.
  The small linking matrix is equal to $(-\ell)$. In particular, the signature of the small linking matrix is \emph{minus} the sign of the linking number.
\end{example}
\section{Review of Hodge numbers}\label{sec:hodge}
Hodge numbers are objects related to so called hermitian variation structures. 
These were introduced by N\'emethi \cite{Nem_real} as an algebraic framework for describing homological structures
associated with the Milnor fibration of an isolated hypersurface singularity.
We quickly review the terminology following \cite{Nem_real}. We restrict our attention to hermitian variation structures with sign $\epsilon=-1$. In what follows, the star denotes the dual space or the dual map. In matrix notation, the star means transposition followed by complex conjugation.

A \emph{hermitian variation structure}, in short: an HVS, is a quadruple $(U,b,h,V)$, where $U$ is a finite dimensional vector space over $\C$,
$b\colon U\to U^*$ is a map inducing a skew-Hermitian pairing on $U$ (that is $\overline{b}^*=-b$), $h\colon U\to U$ is $b$-orthogonal and $V\colon U^*\to U$ is such that $\overline{V^*}=V\circ\overline{h^*}$ and $V\circ b=h-I$. Throughout the paper we will assume that HVS are \emph{simple}, that is, $V$ is invertible. In this case, $V$ determines both $b$ and $h$ via formulae
\begin{equation}\label{eq:v_determines}
  b=(V^*)^{-1}-V^{-1},\ h=V(V^*)^{-1}.
\end{equation}

Two HVS $(U,b,h,V)$ and $(U',b',h',V')$ are isomorphic if there
is a map $\phi\colon U\to U'$ intertwining $b,h,V$ with $b',h',V'$
respectively. This means that $b=\ol{\phi}^* b'\phi$, $h=\phi^{-1}h'\phi$ and $V=\phi^{-1}V'(\ol{\phi}^*)^{-1}$; see \cite[Definition 2.5]{Nem_real}. It is worth to stress that in matrix notation, $h$ transforms by a conjugation, and $b$ and $V$ transform by a congruence. We note that there is rather obvious notion of a direct
sum of two HVS.

Basic examples of HVS were given by N\'emethi in \cite[Section 2]{Nem_real}.
For any integer $k$, a unit complex number $\lambda$ and a sign choice $u\in\{1,-1\}$, there exists
an HVS $\cW^k_\lambda(u)$, such that $\dim U=k$ and $h$ is a single Jordan block with eigenvalue $\lambda$. The structures
$\cW^k_\lambda(+1)$ and $\cW^k_\lambda(-1)$ are distinguished by the signature of the pairing $b$.
In this paper we need an explicit form of the structure $\cW^1_1(u)$:
\begin{equation}\label{eq:v1}
\cW^1_1(u)=(\C,0,1,u);
\end{equation}
compare \cite[Example 2.7, item 7]{Nem_real}.

On the other hand, for any integer $\ell$ and a complex number $\mu$ with $|\mu|\in(0,1)$, there exists
an HVS $\cV^{2\ell}_\mu$. For this structure, 
$h$ is a sum of Jordan blocks of size $k$: one with eigenvalue $\mu$, another one with eigenvalue $\overline{\mu}^{-1}$.
\begin{remark}\label{rem:for_future}
For future use, we note that all the structures $\cV^{2\ell}_\mu$ and $\cW^k_\lambda(u)$ for $\lambda\neq 1$
have non-degenerate pairing $b$. The pairing $b$ associated to each of the structures $\cW^k_1(u)$,
has one-dimensional kernel.
\end{remark}
The classification theorem tells us that each simple HVS can be presented as a direct sum of
basic structures. More precisely, we have the following result.
\begin{theorem}[see \expandafter{\cite[Theorem 2.9]{Nem_real}}]\label{thm:class}
  Suppose $\cV$ is a simple HVS. There exists uniquely defined
  non-negative integers $p^k_\lambda(u)$ and $q^\ell_\mu$ (with $k,\ell,\lambda,\mu,u$ as above)
  such that $\cV$ is isomorphic to the sum
  \begin{equation}\label{eq:sum_of_all}
    \cV=\bigoplus_{k,\lambda,u} p^k_{\lambda}(u)\cW^k_\lambda(u)\oplus\bigoplus_{\ell,\mu}q^\ell_\mu\cV^{2\ell}_{\mu},
  \end{equation}
  where the symbol $s\cV$ for a non-negative integer $s$ should be interpreted as the direct sum of $s$ copies of the structure $\cV$.
\end{theorem}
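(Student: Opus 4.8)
The plan is to exploit the fact that, for a simple HVS, the variation $V$ is invertible and---by \eqref{eq:v_determines}---determines both $b$ and $h$. Consequently an isomorphism of simple HVS is nothing but a $*$-congruence $V\mapsto \phi^{-1}V(\ol{\phi}^*)^{-1}$ of the invertible operator $V$, so the whole problem reduces to classifying the pair $(U,b,h)$ consisting of a (possibly degenerate) skew-Hermitian form $b$ together with its isometry $h=V(V^*)^{-1}$. First I would record the two structural facts that drive everything: that $h$ is $b$-orthogonal, so $b$ pairs the generalized eigenspace $U_\lambda=\ker(h-\lambda I)^{\dim U}$ with $U_{\ol{\lambda}^{-1}}$ and annihilates $U_\lambda\times U_\mu$ unless $\mu=\ol{\lambda}^{-1}$; and that $\ker b=\ker(h-I)$, so the degeneracy of $b$ is confined to the eigenvalue $1$. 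This already explains why, by Remark~\ref{rem:for_future}, only the blocks $\cW^k_1(u)$ carry a kernel.

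Next I would build the coarse $b$-orthogonal splitting $U=U_{=1}\oplus U_{\neq 1}$, where $U_{=1}=\bigoplus_{|\lambda|=1}U_\lambda$ and $U_{\neq 1}=\bigoplus_{|\mu|\neq 1}U_\mu$. On $U_{\neq 1}$ the form $b$ has no self-paired eigenvalues, so for each $\mu$ with $0<|\mu|<1$ the subspace $U_\mu\oplus U_{\ol{\mu}^{-1}}$ is a $b$-hyperbolic space with $U_\mu$ and $U_{\ol{\mu}^{-1}}$ isotropic and perfectly paired by $b$. Here $b$ and $h$ are rigidly determined once one knows the Jordan type of $h$ on $U_\mu$: splitting $U_\mu$ into Jordan blocks and taking the dual basis on $U_{\ol{\mu}^{-1}}$ produces exactly the model summands $\cV^{2\ell}_\mu$, and the multiplicities $q^\ell_\mu$ are read off as the number of Jordan $\ell$-blocks of $h$ at $\mu$. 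No sign invariant appears because a hyperbolic form has none.

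The heart of the matter is the self-paired part $U_{=1}$, where for each unit $\lambda$ the operator $h$ has the single eigenvalue $\lambda$ and $b$ restricts to a skew-Hermitian form (nondegenerate when $\lambda\neq 1$, with one-dimensional kernel per unipotent Jordan block when $\lambda=1$). Classifying such an isometric pair into Jordan blocks that each carry a well-defined sign $u\in\{1,-1\}$ is the genuinely hard step, and is exactly where the invariants $p^k_\lambda(u)$ live. The strategy I would follow is the classical one for forms with an automorphism: pass to the nilpotent $N=h-\lambda I$, and on each subquotient controlled by $\ker N^k$ observe that $b$ together with a suitable power of $N$ induces a \emph{nondegenerate} Hermitian (equivalently, after multiplying by $i$, symmetric) form whose signature is an isomorphism invariant; this signature, together with the Jordan block size $k$, is precisely the datum $u$. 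One then checks that an abstract block with these invariants is isomorphic to the model $\cW^k_\lambda(u)$ by writing $h$ in Jordan form and solving for $b$ and $V$ in that basis, comparing with the normalization \eqref{eq:v1} in the smallest case.

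Uniqueness then follows because every listed multiplicity has been realized as an honest invariant of $(U,b,h)$: the $q^\ell_\mu$ and the block-size part of $p^k_\lambda(u)$ are Jordan data of $h$, while the sign $u$ is the signature of the induced forms above. The main obstacle is this last, self-paired, case: keeping track of which power of $N$ pairs which subquotient nondegenerately, and proving that the resulting signatures are complete---not merely well-defined---invariants, is the delicate linear algebra on which the decomposition rests; the hyperbolic and the uniqueness parts are comparatively formal once it is in place.
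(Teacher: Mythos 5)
Your opening reduction contains a genuine error, and it is fatal to the uniqueness part of the theorem. You claim that since $V$ determines $b$ and $h$ via \eqref{eq:v_determines}, the classification of simple HVS ``reduces to classifying the pair $(U,b,h)$.'' This reduction goes the wrong way: $V$ determines $(b,h)$, but $(b,h)$ does \emph{not} determine $V$. The relation $V\circ b=h-I$ pins down $V$ only on the image of $b$, so the two data are equivalent exactly where $b$ is nondegenerate --- and, as you yourself observe, $b$ degenerates precisely on $\ker(h-I)$. The paper's own formula \eqref{eq:v1} gives the counterexample: $\cW^1_1(+1)=(\C,0,1,+1)$ and $\cW^1_1(-1)=(\C,0,1,-1)$ have \emph{identical} triples $(U,b,h)=(\C,0,1)$, yet they are non-isomorphic HVS, since an isomorphism $\phi\in\C^*$ would force $+1=\phi^{-1}(-1)(\ol{\phi})^{-1}=-|\phi|^{-2}$. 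A proof along your lines would therefore ``show'' that $\cW^1_1(+1)\cong\cW^1_1(-1)$ and collapse $p^1_1(+1)$ and $p^1_1(-1)$ into a single multiplicity --- contradicting the uniqueness asserted in the theorem, and destroying exactly the invariant $p^1_1(+1)-p^1_1(-1)$ on which the whole paper rests (Lemmas~\ref{lem:diagonalize} and \ref{lem:limit_to_signature}, and Theorem~\ref{thm:main} itself).

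The same defect resurfaces in your mechanism for extracting the signs $u$: you propose to read them off from nondegenerate forms built out of $b$ and powers of $N=h-\lambda I$. On a $\cW^1_1(u)$ summand both $b$ and $N$ are zero, so no such form can see $u$; the sign there is carried by $V$ alone (note that $\ol{V^*}=V\circ\ol{h^*}$ forces $V$ to be Hermitian on the $1$-eigenspace, and it is the signature of this Hermitian form on $\ker b$ that supplies the missing invariant). Your treatment of the non-unit eigenvalues (hyperbolic pairing of $U_\mu$ with $U_{\ol{\mu}^{-1}}$, multiplicities $q^\ell_\mu$ as Jordan data) and of unit $\lambda\neq 1$ (where $b$ is nondegenerate and the classical classification of isometric structures applies) is sound; but the unipotent part cannot be handled without the variation map. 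For the record, the paper does not prove this statement at all: it quotes it from N\'emethi \cite[Theorem 2.9]{Nem_real}, whose argument treats the eigenvalue-$1$ part through $V$ in just this way.
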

We call the integers $p^k_\lambda(u)$ and $q^\ell_\mu$ the 
\emph{Hodge numbers} associated with the HVS $\cV$.
We will refer to the structures $\cW^k_\lambda(u)$ and $\cV^{2\ell}_\mu$ as \emph{basic structures}.

\smallskip
In \cite[Section 3.1]{Hodge_type} there was defined an HVS for a link in $S^3$. The following construction
is a special case of that construction.
\begin{proposition}\label{prop:link_hvs}
Suppose $L\subset S^3$ is a link whose Seifert matrix is S-equivalent over $\R$ to a non-degenerate matrix
$S$. Then there exists a well-defined HVS for the link, such that $V=(S^T)^{-1}$. The pairing $b$ of this structure is adjoint to the form $S-S^T$ and $h=(S^T)^{-1}S$.
\end{proposition}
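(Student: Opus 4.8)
The plan is to treat $V=(S^T)^{-1}$ as the defining datum of the structure and to recover $b$ and $h$ from the simplicity formulae \eqref{eq:v_determines}. As $S$ is non-degenerate, $V$ is invertible, so the quadruple we build is automatically simple and $b,h$ are uniquely forced; the whole task then reduces to checking the four HVS axioms for $(U,b,h,V)$ and confirming that the isomorphism class does not depend on the chosen representative $S$. The essential simplification throughout is that $S$ is a \emph{real} matrix, so $\ol V=V$, $\ol h=h$, $\ol b=b$, and every star reduces to an ordinary transpose.

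First I would compute $b$ and $h$ explicitly. From $V=(S^T)^{-1}=(S^{-1})^T$ and the realness of $S$ one has $V^*=V^T=S^{-1}$, hence $(V^*)^{-1}=S$, and \eqref{eq:v_determines} gives
\[
  b=(V^*)^{-1}-V^{-1}=S-S^T,\qquad h=V(V^*)^{-1}=(S^T)^{-1}S,
\]
which are precisely the asserted forms, $b$ being the map adjoint to the form $S-S^T$. Now I would verify the axioms. Since $b=S-S^T$ is real and antisymmetric, $\ol b^*=b^T=-b$, so the pairing is skew-Hermitian; the relation $V\circ b=h-I$ is immediate from $(S^T)^{-1}(S-S^T)=(S^T)^{-1}S-I$. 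For $\ol{V^*}=V\circ\ol{h^*}$ one uses $h^*=h^T=S^TS^{-1}$ to get $V\,\ol{h^*}=(S^T)^{-1}S^TS^{-1}=S^{-1}=\ol{V^*}$, and $b$-orthogonality of $h$ is the identity $h^*bh=S^TS^{-1}(S-S^T)(S^T)^{-1}S=S-S^T=b$. These are all routine substitutions exploiting that $h^T=S^TS^{-1}$.

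It remains to establish well-definedness, which I expect to be the only genuinely non-computational step. If $S'=P^TSP$ with $P$ an invertible real matrix, then $V'=((S')^T)^{-1}=P^{-1}V(P^T)^{-1}$, and taking $\phi=P^{-1}$ one checks directly that $V=\phi^{-1}V'(\ol\phi^*)^{-1}$, so $\phi$ realizes an isomorphism of HVS in the sense of \cite[Definition 2.5]{Nem_real}; thus congruent Seifert matrices yield isomorphic structures. To conclude I would invoke the classical fact that any two non-degenerate matrices lying in a single $\R$-S-equivalence class are congruent, so that the construction descends to the S-equivalence class and agrees with the special case of the general link HVS of \cite[Section 3.1]{Hodge_type}. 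The hard part is precisely this last appeal to the structure theory of S-equivalence; the axiom checks, by contrast, are forced and mechanical.
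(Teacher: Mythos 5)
Your proof is correct, but note first that the paper gives no internal proof of Proposition~\ref{prop:link_hvs}: it is presented as a special case of the construction of \cite[Section 3.1]{Hodge_type}, so the comparison here is with that cited construction rather than with an argument in the text. Your mechanical part is exactly right: with $S$ real and invertible, $V=(S^T)^{-1}$ gives $V^*=S^{-1}$ and $(V^*)^{-1}=S$, so \eqref{eq:v_determines} forces $b=S-S^T$ and $h=(S^T)^{-1}S$, and the HVS axioms reduce to the matrix identities you wrote (all correct, e.g. $h^*bh=S^TS^{-1}(S-S^T)(S^T)^{-1}S=S-S^T$); simplicity is automatic since $V$ is invertible. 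The genuinely load-bearing step is the one you flag at the end: descent from congruence classes to the S-equivalence class. Two remarks there. First, the ``classical fact'' you invoke --- that non-degenerate matrices which are S-equivalent over a field are congruent over that field --- is indeed a theorem (due to Trotter, with the version relevant here in Keef's work, the same source this paper cites in Lemma~\ref{lem:alex_non_zero}), and it is precisely the ingredient on which the well-definedness in \cite[Section 3.1]{Hodge_type} rests; so the appeal is legitimate and matches the cited construction rather than being circular. Second, ``well-defined HVS for the link'' also requires independence of the choice of Seifert surface: you should state explicitly that any two Seifert matrices of $L$ are S-equivalent (the classical Murasugi--Levine theorem), so that all non-degenerate representatives lie in a single S-equivalence class and your congruence argument applies to any pair of them. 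Finally, a cosmetic point: an isomorphism of HVS must intertwine $b$, $h$, and $V$; checking the intertwining for $V$ alone suffices here, but only because the structures are simple, so that \eqref{eq:v_determines} transports the $V$-intertwining to $b$ and $h$ --- this deserves one explicit sentence in your write-up.
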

The Hodge numbers associated with the link determine its Alexander polynomial and the Tristram--Levine signature, see \cite[Section 4]{Hodge_type}. Hodge numbers for links satisfy the relations $p^k_{\lambda}(u)=p^k_{\overline{\lambda}}(u)$, $q^\ell_{\mu}=q^\ell_{\overline{\mu}}$.

\section{The part of HVS with eigenvalue $1$}\label{sec:eigen}
The next result is due to Keef \cite{Keef}. We refer to \cite[Section 4]{BZ} for a more detailed
exposition of Keef's results in the present context.
\begin{lemma}\label{lem:alex_non_zero}
  Let $L$ be a link. The one-variable Alexander polynomial of $L$ is non-zero if and only if any Seifert matrix of $L$
  is S-equivalent (over $\Q$) to an invertible matrix.
\end{lemma}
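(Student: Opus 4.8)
The plan is to prove the two implications separately. Throughout, write $\Delta_L(t)$ for $\det(tS-S^T)$ computed from a Seifert matrix $S$; this is well defined up to a nonzero rational scalar and a unit $\pm t^{k}$, because rational congruence $S\mapsto P^TSP$ multiplies it by $\det(P)^2$ and a stabilization multiplies it by $\pm t$ while enlarging the size by $2$. The implication \emph{invertible $\Rightarrow$ non-zero} is then immediate: if $S$ is S-equivalent over $\Q$ to an invertible matrix $S'$, then $\Delta_L(t)$ agrees up to such a factor with $\det(tS'-(S')^T)$, whose value at $t=0$ is $(-1)^{\dim S'}\det(S')\neq 0$, so the polynomial does not vanish identically.

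For the converse I would show that a representative of minimal size in the rational S-equivalence class is automatically invertible once $\Delta_L\neq 0$. The engine is a destabilization step: if $\det S=0$ but $\Delta_L\neq 0$, then $S$ is congruent over $\Q$ to a matrix carrying an inverse-stabilization block, and peeling it off produces a strictly smaller Seifert matrix in the same S-equivalence class whose Alexander polynomial is $\Delta_L$ divided by $t$. The arithmetic closes up because $\det S$ is, up to sign, the coefficient of $t^{\dim S}$ in $\Delta_L$, so $\det S=0$ is equivalent to the nonnegative integer $\dim S-\deg_t\Delta_L$ (the multiplicity of the eigenvalue at infinity of the pencil $tS-S^T$) being positive; each destabilization drops this integer by one, and it must therefore reach $0$, at which point the matrix is invertible.

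The clean input that makes a destabilization available is the following observation. If a nonzero vector $u$ lies in $\ker S\cap\ker S^T$, then $(tS-S^T)u=0$ for every $t$, whence $\det(tS-S^T)\equiv 0$ and $\Delta_L=0$; contrapositively, $\Delta_L\neq 0$ forces $\ker S\cap\ker S^T=0$. Thus whenever $\det S=0$ the left radical $\ker S^T$ and the right radical $\ker S$ are both nontrivial of equal dimension yet meet only in $0$, and the bilinear pairing $(x,y)\mapsto x^TSy$ between them is the concrete trace of the palindromic symmetry $\det(tS-S^T)=(-1)^{\dim S}t^{\dim S}\det(t^{-1}S-S^T)$, which interchanges the eigenvalues $0$ and $\infty$ and pairs the two radicals.

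The main obstacle is the destabilization step itself. The pencil $tS-S^T$ may carry Jordan chains of length greater than one at the eigenvalue $\infty$ (equivalently, the pairing between the two radicals can be degenerate), so one cannot in general split off a single block by a naive choice of radical vectors: one must choose a vector at the correct end of a Jordan chain and produce the rational congruence exhibiting the exact stabilization pattern $\left(\begin{smallmatrix} S_0 & \gamma & 0 \\ 0 & 0 & 1 \\ 0 & 0 & 0 \end{smallmatrix}\right)$, and then verify that the reduced matrix is again the Seifert matrix of a link. This is precisely the content of Keef's theorem \cite{Keef}; the Kronecker--Weierstrass normal form of the regular pencil $tS-S^T$ organizes the eigenvalue bookkeeping, and the detailed reduction, carried out over $\Q$ so that the full freedom of rational congruence is available, is the one expounded in \cite[Section 4]{BZ}.
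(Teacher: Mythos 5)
Your proposal is correct and, at its core, takes the same route as the paper: the easy direction is the same determinant-evaluation-at-$t=0$ argument, and the hard direction (nonvanishing Alexander polynomial implies an invertible representative) is in both cases delegated to Keef's theorem over $\Q$ --- the paper quotes \cite[Proposition 3.1]{Keef} directly for the splitting $S\sim S_{ndeg}\oplus S_0$ with $S_0$ a zero block, while you invoke the same result as the engine for your destabilization induction. The surrounding bookkeeping you add (invariance of $\dim S-\deg_t\det(tS-S^T)$, the observation that $\Delta_L\neq 0$ forces $\ker S\cap\ker S^T=0$) is sound, so the two write-ups differ only in packaging, not in substance.
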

\begin{proof}
  Let $S$ be a Seifert matrix of $L$. By Keef \cite[Proposition 3.1]{Keef} we know that $S$ is S-equivalent over $\Q$ to a block matrix $S_{ndeg}\oplus S_0$,
  where $S_{ndeg}$ is invertible and $S_0$ is a matrix with all entries zero. 
  The one-variable Alexander polynomial of $L$ is given by $\det(tS-S^T)$, which
  is zero if $S_0$ has positive dimension.

  Note that the rank of $S_0$ is the rank of the torsion-free part of the Alexander module of $L$ over $\Q[t,t^{-1}]$, hence it does not
  depend on the particular choice of the Seifert matrix within
  its S-equivalence class.
\end{proof}
From now on, we will assume that $L$ is an $r$-component link whose  one-variable Alexander polynomial is non-zero.
Let $S$ be an invertible matrix which is S-equivalent to a Seifert matrix of $L$. We let $\cV_L=(U_L,b_L,h_L,V_L)$
be the HVS associated with $L$. In the decomposition \eqref{eq:sum_of_all} we group terms with $\lambda=1$ and separately all other terms
so as to obtain a decomposition $\cV_L=\cV_{=1}\oplus \cV_{\neq 1}$; 
compare \cite[Section 3.3]{BoNe_Last}.
Accordingly, we write:
\begin{equation}\label{eq:decomp}
U_L=U_{\neq 1}\oplus U_{=1},\ b_L=b_{\neq 1}\oplus b_{=1},\ h_L=h_{\neq 1}\oplus h_{=1},\ V_L=V_{\neq 1}\oplus V_{=1}.\end{equation}
Note that the key property is that all the eigenvalues of $h_{\neq 1}$ are different than $1$ and $h_{=1}$ is a sum of Jordan
blocks with eigenvalue $1$.

Given \eqref{eq:decomp}, as $V_L=(S^T)^{-1}$, changing $S$ by a congruence if needed,
we can decompose the matrix $S$ as a block sum
\begin{equation}\label{eq:decompose_S}
  S=S_{\neq 1}\oplus S_{=1};
\end{equation}
see \cite[Proposition 3.3.10(d)]{BoNe_Last}.
The following observation follows immediately
from the definition of $U_{=1}$.
\begin{lemma}
  Let $p^k_\lambda(u)$, $q^\ell_\mu$ be the Hodge numbers associated with $\cV_L$. Then
  \begin{equation}\label{eq:rank}
  \dim U_{=1}=\sum_{k,u} kp^k_1(u).\end{equation}
\end{lemma}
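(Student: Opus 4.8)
The plan is to read the equality \eqref{eq:rank} directly off the classification theorem. First I would apply Theorem~\ref{thm:class} to present $\cV_L$ as the direct sum \eqref{eq:sum_of_all} of basic structures $\cW^k_\lambda(u)$ and $\cV^{2\ell}_\mu$. By the way the splitting $\cV_L=\cV_{=1}\oplus\cV_{\neq 1}$ was set up in \eqref{eq:decomp}, the space $U_{=1}$ is precisely the generalized eigenspace of $h_L$ associated with the eigenvalue $1$: it collects exactly those summands of \eqref{eq:sum_of_all} on which $h$ has only the eigenvalue $1$.

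Next I would inspect the $h$-eigenvalues on each type of basic structure. For $\cW^k_\lambda(u)$ the endomorphism $h$ is a single Jordan block of size $k$ with eigenvalue $\lambda$, so such a summand lies entirely in $U_{=1}$ when $\lambda=1$, contributing its full dimension $k$, and lies entirely in $U_{\neq 1}$ when $\lambda\neq 1$. For $\cV^{2\ell}_\mu$ the eigenvalues of $h$ are $\mu$ and $\overline{\mu}^{-1}$ with $|\mu|\in(0,1)$, hence both are different from $1$, so these summands contribute nothing to $U_{=1}$. Consequently $U_{=1}$ is the direct sum of the underlying vector spaces of the $p^k_1(u)$ copies of $\cW^k_1(u)$, taken over all $k$ and $u$, and since each copy has $\dim U=k$, summing the dimensions gives $\dim U_{=1}=\sum_{k,u}k\,p^k_1(u)$, which is \eqref{eq:rank}.

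The only point needing a word of justification — and it is a mild one — is that forming the eigenvalue-$1$ generalized eigenspace commutes with the direct-sum decomposition of Theorem~\ref{thm:class}. This holds because $h_L$ preserves every summand and restricts on it to the corresponding $h$, so the generalized eigenspace of $h_L$ for the eigenvalue $1$ is the direct sum of the generalized eigenspaces of the individual summands. There is thus no real obstacle: once the eigenvalues of $h$ on the two families of basic structures are identified, the statement follows by a dimension count, which is why it may be recorded as an immediate observation.
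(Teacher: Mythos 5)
Your proof is correct and follows essentially the same route as the paper, which records this lemma as an immediate consequence of the definition of $U_{=1}$: since $\cV_{=1}$ is by construction the sum of the $\lambda=1$ summands $\cW^k_1(u)$ in \eqref{eq:sum_of_all}, each of dimension $k$ and multiplicity $p^k_1(u)$, the dimension count is exactly yours. Your extra remarks --- that $\cV^{2\ell}_\mu$ contributes nothing because $|\mu|\in(0,1)$ forces $\mu,\overline{\mu}^{-1}\neq 1$, and that taking the eigenvalue-$1$ generalized eigenspace commutes with direct sums --- are the (correct) details the paper leaves implicit.
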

Our next aim is to relate the structure $\cV_{=1}$ with
the linking matrix of the link $L$. For links
of singularities the result we are going to prove can
be found in \cite[Section 3]{Neumann}, even though it is phrased in a different language.

The case of links with possibly degenerate Seifert matrix requires a technical result. Before we
state it, we recall that a \emph{row extension} replaces a matrix $S_1$ by a matrix
\begin{equation}\label{eq:row_ext}
S_2 = \begin{pmatrix}
S_1 & 0 & 0  \\
\xi & 0 & 0 \\
0 & 1 & 0
\end{pmatrix},
\end{equation}
where $\xi$ is a row vector. A \emph{row contraction} is the inverse operation.
 Column extensions and contractions are defined analogously.
There are several conventions regarding these definitions, leading to equivalent formula with different
shape; we adopt the convention that is used
e.g. by Levine and Murasugi; see \cite{Levine_role,Murasugi}.
We also recall that two square matrices are S-equivalent if one can pass
from one to another by a sequence of congruences, row and column extensions, and row and column contractions.

\begin{lemma}\label{lem:preserve}
  Let $U_1,U_2$ be two vector spaces over $\R$, with an embedding $\iota : U_1\to U_2$. Let $S_i\colon U_i\to U_i^*$, $i=1,2$ two maps such that in some choice of basis
  the matrix representing $S_2$ is a row or column extension of a matrix representing $S_1$. Then
  $\iota$ induces an isomorphism $\iota'\colon \ker (S_1-S_1^T)\xrightarrow{\simeq}\ker (S_2-S_2^T)$.
  The isomorphism $\iota'$ induces an isometry of forms $(S_1+S_1^T)$ and $(S_2+S_2^T)$ restricted to $\ker(S_1-S_1^T)$, respectively
  $\ker(S_2-S_2^T)$.
\end{lemma}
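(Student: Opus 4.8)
The plan is to handle a single extension move by a direct computation and to reduce the column case to the row case by transposition. So I would first assume that the matrix of $S_2$ is the row extension \eqref{eq:row_ext} of the matrix of $S_1$, with $\xi$ a row vector of length $n=\dim U_1$. Writing vectors of $U_2$ as triples $(v,a,c)$ with $v\in\R^n$ and $a,c\in\R$, the embedding $\iota$ is the inclusion $v\mapsto(v,0,0)$. The first step is to record the two auxiliary matrices
\[
S_2-S_2^T=\begin{pmatrix} S_1-S_1^T & -\xi^T & 0 \\ \xi & 0 & -1 \\ 0 & 1 & 0 \end{pmatrix},\qquad
S_2+S_2^T=\begin{pmatrix} S_1+S_1^T & \xi^T & 0 \\ \xi & 0 & 1 \\ 0 & 1 & 0 \end{pmatrix}.
\]

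Next I would compute $\ker(S_2-S_2^T)$ by solving $(S_2-S_2^T)(v,a,c)^T=0$. The bottom coordinate forces $a=0$, the middle coordinate forces $c=\xi v$, and the top block then collapses to $(S_1-S_1^T)v=0$. Hence
\[
\ker(S_2-S_2^T)=\{(v,0,\xi v)\ :\ v\in\ker(S_1-S_1^T)\},
\]
so the coordinate projection $\pi\colon U_2\to U_1$, $(v,a,c)\mapsto v$, restricts to an isomorphism $\ker(S_2-S_2^T)\xrightarrow{\simeq}\ker(S_1-S_1^T)$. Since $\pi\circ\iota=\id_{U_1}$, the inverse isomorphism $\iota'$ is the unique lift of $\iota$ landing in the kernel: it sends $v$ to $\iota(v)+(0,0,\xi v)$, that is, it agrees with $\iota$ up to a correction supported in the two new coordinates. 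This gives the first assertion.

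For the isometry I would substitute kernel elements into the symmetric form. For $v,w\in\ker(S_1-S_1^T)$ one has $(S_2+S_2^T)(w,0,\xi w)^T=\bigl((S_1+S_1^T)w,\ 2\xi w,\ 0\bigr)^T$, and pairing on the left with $\iota'(v)=(v,0,\xi v)$ annihilates the middle and bottom entries, leaving $v^T(S_1+S_1^T)w$. Thus $(S_2+S_2^T)(\iota'v,\iota'w)=(S_1+S_1^T)(v,w)$, which is exactly the claimed isometry.

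The computation is elementary, so the one genuine subtlety — and the step I would be most careful about — is the identification of $\iota'$: the given embedding $\iota$ does not itself carry $\ker(S_1-S_1^T)$ into $\ker(S_2-S_2^T)$, since the last coordinate must be corrected by $\xi v$; one must pass to the lift $\iota'$ and check that the symmetric form does not detect this correction (it does not, as the offending coordinates pair only against the zero and new blocks). Finally, the column-extension case reduces to the row case applied to $S_2^T$: a column extension of $S_1$ has transpose equal to a row extension of $S_1^T$, and since $S\mapsto S^T$ fixes $S+S^T$ and only negates $S-S^T$, both $\ker(S_2-S_2^T)$ and the restricted symmetric form are unchanged, so the row-case conclusion transfers verbatim.
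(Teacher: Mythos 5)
Your proposal is correct and follows essentially the same route as the paper: the same block formulas for $S_2\pm S_2^T$, the same corrected lift $v\mapsto(v,0,\xi v)$ of $\iota$, and the same pairing computation showing the symmetric form is preserved. Your explicit solution of the kernel equations (establishing surjectivity directly) and the reduction of the column case to the row case via transposition are slightly more detailed than the paper's ``straightforward calculation'' and ``analogous'' remarks, but the argument is the same.
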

\begin{proof}
Let $n=\dim U_1$.
We restrict to the case when $S_2$ is a row extenstion of $S_1$, the case of the column extension is analogous. In that case,
by \eqref{eq:row_ext} we have
\begin{equation}\label{eq:s12}S_2 - S_2^T = \begin{pmatrix}
S_1 - S_1^T & -\xi^T & 0 \\
\xi & 0 & -1 \\
0 & 1 & 0
\end{pmatrix}.
\end{equation}
For the $(n+2)$-dimensional vector $(u_1,u_2,u_3)$ (with $u_1$ being an $n$-dimensional vector,
$u_2$ and $u_3$ being numbers)
\begin{equation}\label{eq:s2} (S_2 - S_2^T) \begin{pmatrix}u_1 \\ u_2 \\ u_3 \end{pmatrix} = 
\begin{pmatrix}
(S_1 - S_1^T)u_1 - \xi^Tu_2 \\ 
\xi\cdot u_1 - u_3 \\
u_2
\end{pmatrix}.
\end{equation}
Here $\xi\cdot u_1$ is a scalar product. Note that, on the contrary, $\xi^Tu_2$ (without dot) is a column vector.
The map $\iota'$ is defined as $\iota'(u)=(u,0,\xi\cdot u)$.
A straightforward calculation involving \eqref{eq:s2} shows that $\iota'$ takes $\ker (S_1-S_1^T)$
onto $\ker (S_2-S_2^T)$.

Our aim is to show that for any $u,u'\in U_1$, $\iota(u)^T(S_2+S_2^T)\iota(u')=u^T(S_1+S_1^T)u'$. To this end, using \eqref{eq:s12},
we write
\[S_2+S_2^T=\begin{pmatrix} S_1+S_1^T & \xi^T & 0 \\ \xi & 0 & 1 \\ 0 & 1 & 0\end{pmatrix}.\]
Hence, 
\[
  \iota(u)^T(S_2+S_2^T)\iota(u')=
  \begin{pmatrix} u \\ 0\\ \xi\cdot u\end{pmatrix}^T\begin{pmatrix} S_1+S_1^T & \xi^T & 0 \\ \xi & 0 & 1 \\ 0 & 1 & 0\end{pmatrix}
\begin{pmatrix} u' \\ 0\\ \xi\cdot u'\end{pmatrix}=
u^T(S_1+S_1^T) u'.
\]
\end{proof}
\begin{corollary}\label{cor:final}
  Let $\wt{S}$ be a Seifert matrix for an $r$-component link $L$.
  Suppose $\wt{S}$ is $S$-equivalent
  to a non-degenerate matrix $S$. Then:
  \begin{itemize}
    \item The spaces $H=\ker (S-S^T)$ and $\wt{H}=\ker(\wt{S}-\wt{S}^T)$ are
      isomorphic.
    \item The forms $S+S^T$ and $\wt{S}+\wt{S}^T$ restricted to $H$ and $\wt{H}$ respectively have the same rank and signature.
  \end{itemize}
\end{corollary}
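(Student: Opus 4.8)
The plan is to reduce the statement to the single-step result already established in Lemma~\ref{lem:preserve}, using the fact that $S$-equivalence is generated by three types of elementary moves: congruences, row and column extensions, and row and column contractions. I would show that each such move preserves both the isomorphism type of the kernel $\ker(S-S^T)$ and the isometry type of the symmetric form $S+S^T$ restricted to that kernel. Chaining these isomorphisms along a sequence of moves realizing the $S$-equivalence from $\wt S$ to $S$ then yields the corollary, since equal isometry type in particular forces equal rank and signature.

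For a congruence $S' = P^T S P$ with $P$ invertible over $\R$, a direct computation gives $S' - S'^T = P^T(S-S^T)P$ and $S' + S'^T = P^T(S+S^T)P$. Hence $P$ restricts to an isomorphism $\ker(S'-S'^T) \xrightarrow{\simeq} \ker(S-S^T)$, and under this isomorphism the two restricted symmetric forms correspond, so it is an isometry. This settles the congruence moves.

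For a single row or column extension, and for its inverse contraction, the required isomorphism of kernels together with the isometry of the restricted symmetric forms is precisely the content of Lemma~\ref{lem:preserve}; a contraction is handled by applying that lemma to the extension running in the opposite direction. Composing the isomorphisms $\iota'$, their inverses, and the congruence isomorphisms $P$ along the entire sequence of moves produces an isomorphism $\wt H = \ker(\wt S - \wt S^T) \xrightarrow{\simeq} \ker(S - S^T) = H$ under which $\wt S + \wt S^T$ and $S + S^T$ correspond. In particular the two restricted forms have the same rank and signature.

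The only real obstacle is bookkeeping: $S$-equivalence is a \emph{sequence} of moves rather than a single one, so I must check that the per-step isomorphisms compose consistently. Since each step supplies a genuine isomorphism on the relevant kernel together with an isometry of the restricted form, this composition is automatic, and no work beyond Lemma~\ref{lem:preserve} and the elementary congruence computation is required.
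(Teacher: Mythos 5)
Your proof is correct and takes essentially the same approach as the paper: the paper's own argument likewise decomposes the $S$-equivalence into a sequence of congruences and row/column extensions/contractions, handles the extension (and hence contraction) moves by Lemma~\ref{lem:preserve}, and dismisses the congruence case as clear, which you simply spell out with the explicit computation $S'-S'^T=P^T(S-S^T)P$ and $S'+S'^T=P^T(S+S^T)P$. The composition-of-isomorphisms bookkeeping you mention is exactly what the paper's terse proof leaves implicit, so nothing is missing.
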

\begin{proof}
  The matrices $S$ and $\wt{S}$ are related by a sequence of congruences and row/column extensions and row/column contractions. By Lemma~\ref{lem:preserve} row/column extensions preserve $\ker (S-S^T)$ and the form $S+S^T$ restricted to the kernel. Congruences clearly preserves $\ker(S-S^T)$ and the form $S+S^T$.
\end{proof}

Suppose now $L$ is a link whose Alexander polynomial is not identically zero. Let $S$ be an invertible matrix S-equivalent
to a Seifert matrix $\wt{S}$ of $L$ and define $\cV_L=(U_L,b_L,h_L,V_L)$ to be the HVS associated with $L$. Recall the decomposition
$\cV_L=\cV_{=1}\oplus \cV_{\neq 1}$
\begin{lemma}\label{lem:condition_rev}
  Supose the Alexander polynomial of $L$ is non-zero. The following conditions are equivalent.
  \begin{itemize}
  \item[(i)]  The Hodge numbers $p^k_1(\epsilon)$ are zero for $k>1$;
  \item[(ii)] $U_{=1}$ is equal to $\ker b$;
  \item[(iii)]  $(t-1)^r$ does not divide $\Delta(t)$.
\end{itemize}
\end{lemma}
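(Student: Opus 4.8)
The plan is to reduce all three conditions to the single numerical statement $\dim U_{=1}=r-1$, after recording two dimension counts. First I would note the two relevant dimensions. By \eqref{eq:rank}, $\dim U_{=1}=\sum_{k,u}k\,p^k_1(u)$. On the other hand, by Remark~\ref{rem:for_future} the pairing $b_{\neq 1}$ is non-degenerate, while each summand $\cW^k_1(u)$ contributes a one-dimensional kernel to $b$; hence $\ker b_L\subseteq U_{=1}$ and $\dim\ker b_L=\sum_{k,u}p^k_1(u)$, the total number of eigenvalue-$1$ blocks (the geometric multiplicity of the eigenvalue $1$ of $h_L$).

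The first structural input is that $\dim\ker b_L=r-1$. Since $b_L$ is adjoint to $S-S^T$ by Proposition~\ref{prop:link_hvs}, we have $\ker b_L=\ker(S-S^T)$, and by Corollary~\ref{cor:final} this is isomorphic to $\ker(\wt{S}-\wt{S}^T)$ for a genuine Seifert matrix $\wt{S}$. The latter kernel is the radical of the intersection form on the Seifert surface $\Sigma$, which is precisely the space $H=\Image\bigl(H_1(\partial\Sigma;\C)\to H_1(\Sigma;\C)\bigr)$ of Lemma~\ref{lem:small_seifert}; as that lemma records, $H$ is spanned by $[L_1],\dots,[L_r]$ subject to the single relation $[L_1]+\dots+[L_r]=0$, so $\dim\ker b_L=\dim H=r-1$.

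The second structural input identifies $\dim U_{=1}$ with the exponent of $(t-1)$ in $\Delta_L(t)$. Using $h_L=(S^T)^{-1}S$, a direct manipulation gives $\det(tI-h_L)=\det((S^T)^{-1})\det(tS^T-S)\doteq\Delta_L(t)$ up to a unit $\pm t^k$; since S-equivalence preserves $\Delta_L$ up to units, this is unaffected by passing to the invertible representative $S$. Consequently the exponent of $(t-1)$ in $\Delta_L(t)$ equals the algebraic multiplicity of the eigenvalue $1$ of $h_L$, which is exactly $\dim U_{=1}$ because $h_{=1}$ has all eigenvalues equal to $1$ and $h_{\neq 1}$ has none.

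Finally I would assemble the equivalences, which become pure non-negativity bookkeeping. Condition (iii) says the exponent is at most $r-1$, i.e. $\dim U_{=1}\le r-1$; since $\ker b_L\subseteq U_{=1}$ forces $\dim U_{=1}\ge\dim\ker b_L=r-1$, condition (iii) is equivalent to $\dim U_{=1}=r-1$. Condition (ii), namely $U_{=1}=\ker b_L$, is equivalent to $\dim U_{=1}=\dim\ker b_L=r-1$ by the same inclusion. For condition (i), the two counts give $\dim U_{=1}-\dim\ker b_L=\sum_{k,u}(k-1)p^k_1(u)$, a sum of non-negative terms that vanishes precisely when $p^k_1(u)=0$ for all $k>1$; thus (i) again reads $\dim U_{=1}=r-1$. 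The main obstacle is not any single estimate but securing the two structural facts cleanly—above all $\dim\ker b_L=r-1$, where the topology of the Seifert surface enters through Lemma~\ref{lem:small_seifert} and Corollary~\ref{cor:final}; once both counts are in hand, the three equivalences follow immediately.
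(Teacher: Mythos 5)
Your proof is correct and follows essentially the same route as the paper's: the same dimension counts $\dim U_{=1}=\sum k\,p^k_1(u)$ and $\dim\ker b_L=\sum p^k_1(u)$ from \eqref{eq:rank} and Remark~\ref{rem:for_future}, the identification of the $(t-1)$-exponent of $\Delta$ with $\dim U_{=1}$ via $\Delta\doteq\det(tI-h_L)$, and the key fact $\dim\ker b_L=r-1$ obtained through Proposition~\ref{prop:link_hvs}, Corollary~\ref{cor:final}, and the intersection form of the Seifert surface. The only (harmless) difference is organizational: you funnel all three conditions through the single equality $\dim U_{=1}=r-1$, whereas the paper deduces (i)$\Leftrightarrow$(ii) directly from the two counts without invoking $r-1$.
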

\begin{proof}
  Each of the structures $\cW^k_1(u)$ has the property that $\ker b$ is one dimensional. All other basic structures,
  that is, $\cW^k_\lambda(u)$ for $\lambda\neq 1$ or $\cV^{2\ell}_\mu$, have non-degenerate $b$; see Remark~\ref{rem:for_future}
  for an explicit form of these structures in \cite[Section 2]{Nem_real}. This shows that
  \[
    \dim\ker b_L=\sum p^k_1(u),\ \ker b_L\subset U_{=1}.
  \]
  By \eqref{eq:rank} we immediately show that (i) is equivalent to (ii).

  The Alexander polynomial $\Delta(t)$ is equal to $\det(h_L-tI)$ up
  to multiplication by a unit in $\C[t,t^{-1}]$. In particular, the
  maximal integer $\ell$ such that $(t-1)^\ell$ divides $\Delta(t)$
  is equal to the dimension of the eigenspace of $h_L$ corresponding to eigenvalue $1$. That is, $\ell=\dim U_{=1}$.
  To prove equivalence of (ii) and (iii), it is enough to show that $\dim\ker b_L=r-1$.

  By Proposition~\ref{prop:link_hvs}, $\ker b_L=\ker(S^T-S)$.  By the first part of Corollary~\ref{cor:final}, we have
  $\dim\ker S^T-S=\dim\ker \wt{S}^T-\wt{S}$. The form $\wt{S}^T-\wt{S}$
  is the intersection form on $H_1(\Sigma,\C)$. As $\Sigma$ is an oriented surface with $r$ boundary components $\dim\ker (\wt{S}^t-\wt{S})=r-1$. 
\end{proof}

\section{Proof of Theorem~\ref{thm:main}}\label{sec:proof_of_main}
Given the translation between the assumptions of Theorem~\ref{thm:main} and Hodge numbers we can give the proof of Theorem~\ref{thm:main}.

\begin{lemma}\label{lem:diagonalize}
  Let $\cV=(U,b,h,V)$ be a HVS.
  Suppose $p^k_1(u)=0$ for $k>1$. Then, with $S=(V^{-1})^T$, the form $S+S^T$ on
  $\ker b$ has signature $p^k_1(+1)-p^k_1(-1)$.
\end{lemma}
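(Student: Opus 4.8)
The plan is to put $\cV$ into the normal form \eqref{eq:sum_of_all} and compute the contribution of each basic block to the form $S+S^T$ restricted to $\ker b$; the only genuine work is to certify that this computation is legitimate, i.e. that the signature in question is an invariant of the isomorphism class of $\cV$. To this end I would first record that under an HVS isomorphism $\phi$ both $b$ and $V$ transform by congruence, so that $S=(V^{-1})^T$, and hence $S+S^T$, transform by congruence as well (it is cleanest to work with the Hermitian form $V^{-1}+(V^*)^{-1}$, which coincides with $S+S^T$ because the relevant $V=(S^T)^{-1}$ is real, and which is manifestly carried to its primed counterpart by the congruence). Since an isomorphism is realized by an invertible map, it also carries $\ker b$ isomorphically onto $\ker b'$. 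Consequently $\sign\bigl((S+S^T)|_{\ker b}\bigr)$ depends only on the isomorphism class of $\cV$, and I may assume $\cV$ is already presented as the direct sum in Theorem~\ref{thm:class}.

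Next I would locate $\ker b$ inside that decomposition. By the hypothesis $p^k_1(u)=0$ for $k>1$, the summands with eigenvalue $1$ are exactly copies of $\cW^1_1(+1)$ and $\cW^1_1(-1)$. By Remark~\ref{rem:for_future} every other basic summand (namely $\cW^k_\lambda(u)$ with $\lambda\neq 1$ and $\cV^{2\ell}_\mu$) has non-degenerate $b$, while each $\cW^1_1(u)$ contributes a one-dimensional kernel. Hence $\ker b$ is precisely the direct sum of the underlying lines of the $\cW^1_1(u)$ summands, and in particular $\dim\ker b=p^1_1(+1)+p^1_1(-1)$. Because the decomposition makes $V$, and therefore $S$ and $S+S^T$, block diagonal, the restriction of $S+S^T$ to $\ker b$ is the orthogonal direct sum of its restrictions to the individual $\cW^1_1(u)$ lines, so the signature is additive over these blocks.

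It then remains to do the one-dimensional computation. On $\cW^1_1(u)=(\C,0,1,u)$ from \eqref{eq:v1} we have $V=u$ with $u\in\{+1,-1\}$, whence $S=(V^{-1})^T=u$ and $S+S^T=2u$; since $b=0$ on this block, this $1\times 1$ form is the whole restriction of $S+S^T$ to $\ker b$ there, and its signature is $\sgn(u)=u$. Summing over all summands, $\sign\bigl((S+S^T)|_{\ker b}\bigr)=p^1_1(+1)-p^1_1(-1)$, which under the hypothesis is the asserted value $\sum_{k}\bigl(p^k_1(+1)-p^k_1(-1)\bigr)$.

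The step I expect to be the main obstacle is the invariance assertion of the first paragraph: one must pin down the correct congruence-covariant (Hermitian) object behind the symmetric expression $S+S^T$ so that restriction to $\ker b$ commutes with passage to the normal form. It is also precisely here that the hypothesis on the $p^k_1$ is indispensable: were a block $\cW^k_1(u)$ with $k>1$ present, its one-dimensional contribution to $\ker b$ would sit inside a longer Jordan string, on which $S+S^T$ restricted to the kernel can be degenerate, and then the clean count $p^1_1(+1)-p^1_1(-1)$ would fail. I would therefore state and use the non-degeneracy of $(S+S^T)|_{\ker b}$ that the hypothesis guarantees as the quantitative heart of the argument.
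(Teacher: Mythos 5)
Your proof is correct and follows essentially the same route as the paper's: decompose $\cV$ into basic structures via Theorem~\ref{thm:class}, identify $\ker b$ with the span of the $\cW^1_1(\pm 1)$ summands using Remark~\ref{rem:for_future} (the paper routes this step through Lemma~\ref{lem:condition_rev}, whose proof is exactly your argument), and compute the signature blockwise from \eqref{eq:v1}. Your explicit verification that $\sign\bigl((S+S^T)|_{\ker b}\bigr)$ is an isomorphism invariant is a point the paper leaves implicit (it only records earlier that $b$ and $V$ transform by congruence), but it is the same underlying observation, not a different method.
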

\begin{proof}
  Consider the decomposition $\cV=\cV_{=1}\oplus\cV_{\neq 1}$. By Lemma~\ref{lem:condition_rev}
  we have $\ker b=U_{=1}$. Therefore, $S+S^T$ on $\ker b$ is precisely $S_{=1}+S_{=1}^T$.

  Note that $p^k_1(u)=0$ for $k>1$ implies that $\cV_{=1}$ is a sum
  of $p^1_1(+1)$ copies of $\cW_1^1(+1)$ and $p^1_1(-1)$ copies of $\cW_1^1(-1)$. Thus,
  $V_{=1}$ is diagonal with signature $p^1_1(+1)-p^1_1(-1)$. Therefore,
$S_{=1}+S_{=1}^T$ is diagonal with the same signature.
\end{proof}
\begin{lemma}\label{lem:limit_to_signature}
  Let $L$ be link, $\cV_L$ be the HVS associated to it and suppose
  $p^k_1(u)=0$ for $k>1$.
  The limit of the signature function $\lim_{z\to 1} \sigma_L(z)$ is equal to $p^1_1(+1)-p^1_1(-1)$.
\end{lemma}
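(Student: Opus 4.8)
The plan is to compute the limit directly from the Seifert form, using the block decomposition $S=S_{=1}\oplus S_{\neq 1}$ of \eqref{eq:decompose_S} together with the identification $\ker b_L=U_{=1}$ furnished by Lemma~\ref{lem:condition_rev}. Throughout I would work with an invertible $S$ which is S-equivalent to a Seifert matrix, so that $V_L=(S^T)^{-1}$ as in Proposition~\ref{prop:link_hvs}.

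First I would rewrite the Hermitian matrix whose signature defines $\sigma_L$. Writing $z=e^{it}$ and using $1-e^{it}=-2i\sin(t/2)e^{it/2}$, one obtains
\[
(1-z)S+(1-\ol{z})S^T=2\sin(t/2)\cdot i\bigl(e^{-it/2}S^T-e^{it/2}S\bigr).
\]
For $t>0$ the scalar $2\sin(t/2)$ is positive and does not affect the signature, so $\sigma_L(e^{it})=\sign N(t)$, where $N(t):=i(e^{-it/2}S^T-e^{it/2}S)$ is a continuous family of Hermitian matrices with $N(0)=-i(S-S^T)$. Since $S$ is real one checks $\sigma_L(\ol{z})=\sigma_L(z)$, so it suffices to analyse the one-sided limit $t\to 0^+$. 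The difficulty is that $N(0)$ is degenerate: its kernel is exactly $\ker(S-S^T)=\ker b_L$, so the naive value $\sign N(0)$ does not by itself compute $\lim\sign N(t)$.

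To handle this I would pass to the block decomposition $N(t)=N_{=1}(t)\oplus N_{\neq 1}(t)$ induced by $S=S_{=1}\oplus S_{\neq 1}$, which is valid over $\R$ because the generalized $1$-eigenspace of $h_L$ is defined over $\R$; by additivity $\sign N(t)=\sign N_{=1}(t)+\sign N_{\neq 1}(t)$. On the $\neq 1$ block, $N_{\neq 1}(0)=-i(S_{\neq 1}-S_{\neq 1}^T)$ is non-degenerate, since $\ker(S-S^T)=U_{=1}$ lies entirely in the other block; by continuity of eigenvalues $\sign N_{\neq 1}(t)$ is constant and equal to $\sign N_{\neq 1}(0)$ for small $t>0$. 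Here is the key observation: $S_{\neq 1}-S_{\neq 1}^T$ is a real skew-symmetric matrix, hence has purely imaginary eigenvalues occurring in conjugate pairs $\pm i\nu$, so $-i(S_{\neq 1}-S_{\neq 1}^T)$ has eigenvalues $\pm\nu$ in symmetric pairs and therefore signature $0$. Thus the $\neq 1$ block contributes nothing to the limit.

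It remains to evaluate the $=1$ block. On $U_{=1}=\ker b_L$ we have $S_{=1}=S_{=1}^T$, so $N_{=1}(t)=i(e^{-it/2}-e^{it/2})S_{=1}=2\sin(t/2)S_{=1}$, which for $t>0$ is a positive multiple of the symmetric matrix $S_{=1}$. Hence $\sign N_{=1}(t)=\sign S_{=1}=\sign\bigl((S+S^T)|_{\ker b_L}\bigr)$, and by Lemma~\ref{lem:diagonalize} this equals $p^1_1(+1)-p^1_1(-1)$. Combining the two blocks gives $\lim_{z\to 1}\sigma_L(z)=p^1_1(+1)-p^1_1(-1)$, as claimed. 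The main obstacle, and the crux of the argument, is the degeneration of $N(0)$ on $\ker b_L$: everything hinges on separating the non-degenerate $\neq 1$ block, whose signature is forced to vanish by skew-symmetry, from the degenerate $=1$ block, where the limit is read off from the leading-order term $2\sin(t/2)S_{=1}$.
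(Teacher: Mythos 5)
Your proof is correct, but it takes a genuinely different route from the paper's. The paper's proof is a sketch that leans on \cite[Proposition 4.14]{Hodge_type}: it uses additivity of the signature under direct sums of HVS together with the classification of Theorem~\ref{thm:class}, and then checks structure by structure that $\cV^{2\ell}_\mu$ and the pairs $\cW^k_\lambda(u)\oplus\cW^k_{\ol{\lambda}}(u)$, $\lambda\neq\pm 1$, contribute zero to the limit, so that under the hypothesis only the summands $\cW^1_1(u)$ matter, each contributing $u$. You instead compute directly with the invertible matrix $S$: after factoring out the positive scalar $2\sin(t/2)$, you split $N(t)$ along \eqref{eq:decompose_S}, kill the $\neq 1$ block by non-degeneracy of $S_{\neq 1}-S_{\neq 1}^T$ (from Lemma~\ref{lem:condition_rev}) plus continuity of eigenvalues plus the observation that $-i$ times a real skew-symmetric matrix has signature zero, and identify the $=1$ block as $2\sin(t/2)S_{=1}$, finishing with Lemma~\ref{lem:diagonalize}. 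Your argument is more elementary and self-contained: it avoids the external citation and the case-by-case analysis of basic structures, replacing it by a single symmetry observation. What the paper's route buys is generality: the structure-by-structure bookkeeping also controls the limit when higher Jordan blocks $\cW^k_1(u)$, $k>1$, are present (the situation alluded to in the remark following Theorem~\ref{thm:main}), whereas your skew-symmetry trick says nothing there, since $S_{=1}$ is then no longer symmetric. One point you should make fully explicit: your signature-zero claim genuinely requires $S_{\neq 1}$ to be a \emph{real} matrix --- for a complex block, $-i$ times a skew-Hermitian matrix can have any signature --- so you need \eqref{eq:decompose_S} to be realized by a real congruence. Your parenthetical reason is the right one but needs one more line: both $U_{=1}=\ker(h_L-I)^m$ and $U_{\neq 1}=\ker q(h_L)$, with $q(t)=\det(tI-h_L)/(t-1)^m$ a real polynomial, are complexifications of real subspaces because $h_L$ is a real matrix, and the $S$-orthogonality of these two subspaces, which holds over $\C$ by \eqref{eq:decompose_S}, therefore restricts to the real forms, producing real blocks $S_{=1}$ and $S_{\neq 1}$. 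With that line added, your proof is complete.
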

\begin{proof}[Sketch of proof]
  The statement follows immediately from \cite[Proposition 4.14]{Hodge_type}. For the sake of completeness we recall the main elements of the proof. 

  The signature function $z\mapsto\sigma(z)$ can be associated with
  any simple HVS via the formula resembling \eqref{eq:sigma}: $\sigma(z)=\sign((1-z)({V^{-1}})^{T}+(1-\overline{z})V^{-1})$. With this approach
  the signature is additive with respect direct sums of HVS. In particular, to prove Lemma~\ref{lem:limit_to_signature}, it is enough to understand the behavior of $\sigma(z)$ for structures $\cW^k_\lambda(u)$
  and $\cV^{2\ell}_\mu$. Now, for $\cV^{2\ell}_\mu$ we have vanishing
  $\sigma(z)$ by explicit calculations and for $\cW^k_{\lambda}(u)\oplus\cW^k_{\ol{\lambda}}(u)$ with $\lambda\neq\pm 1$, the limit at $1$ of $\sigma(z)$ can be shown to be zero. From these computations, and
  using symmetry $p^k_\lambda(u)=p^k_{\overline{\lambda}}(u)$, we show that 
  the only contribution to $\lim_{z\to 1}\sigma(z)$ can come from $\cW^k_1(u)$. In our situtation,
  only $\cW^1_1(u)$ is a summand of $\cV$. For this structure, by \eqref{eq:v1}, we conclude that $\sigma(z)=u$ for all $z\in S^1\setminus\{1\}$, in particular $\lim_{z\to 1}\sigma(z)=u$.
\end{proof}
Now we are in position to give a proof of the main theorem.
\begin{proof}[Proof of Theorem~\ref{thm:main}]
  Let $\Sigma$ be a Seifert surface for $L$ and let $\wt{S}$ be the Seifert matrix associated to it. Let $H_{\wt{S}}\subset H_1(\Sigma;\C)$ be the kernel of $\wt{S}-\wt{S}^T$. Then $H_{\wt{S}}$ can be identified with the image
  $H_1(\partial\Sigma;\C)\to H_1(\Sigma;\C)$.
  Next, let $S$ be an invertible matrix S-equivalent to $\wt{S}$.
  Write $H_S=\ker(S-S^T)$.
  We claim that the following numbers are equal:
  \begin{itemize}
    \item[(a)] The signature of the linking matrix of $L$;
    \item[(b)] The signature of the small linking matrix of $L$;
    \item[(c)] The signature of $\wt{S}+\wt{S}^T$ restricted
      to $\ker(\wt{S}-\wt{S}^T)$;
    \item[(d)] The signature of $S+S^T$ restricted
      to $\ker(S-S^T)$;
    \item[(e)] The difference $p^1_1(+1)-p^1_1(-1)$ for the Hodge
      numbers associated with $\cV_L$;
    \item[(f)] The limit signature $\sigma^1$.
  \end{itemize}
  Equality of (a) and (b) is Lemma~\ref{lem:small}. Equality of (b) and (c) is precisely Lemma~\ref{lem:small_seifert}. Equality of (c) and (d)
  follows from Corollary~\ref{cor:final}. Note that until this moment we do not use the assumption of Theorem~\ref{thm:main}. Now,
  the equality of (d) and (e) follows from Lemma~\ref{lem:diagonalize} which uses Theorem~\ref{thm:main} via the equivalence of items (i) and (iii) of Lemma~\ref{lem:condition_rev}. Finally, equality of (e) and (f)
  follows from Lemma~\ref{lem:limit_to_signature} (using again Condition~(iii) of Lemma~\ref{lem:condition_rev}).
\end{proof}

\section{Examples}\label{sec:examples}
\subsection{The Hopf link}
For the positive Hopf link we have $\lk(L_1, L_2) = 1$ so the linking matrix becomes 
\[A = \begin{pmatrix}
-1 & 1 \\
1 & -1
\end{pmatrix}\]
The signature of this matrix is $-1$ and so is the signature $\sigma^1$.
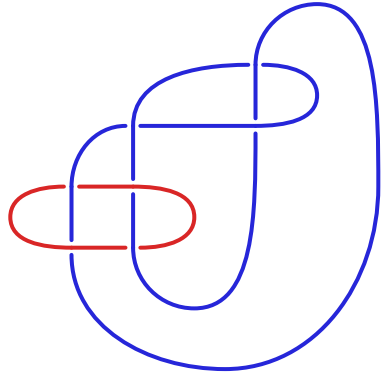
\begin{figure}
\begin{tikzpicture}[line width=1.5, line cap=round, line join=round, scale=0.5]
  \begin{scope}[color=linkcolor0]
    \draw (3.76, 4.58) .. controls (4.53, 4.58) and (5.37, 4.43) .. 
          (5.37, 3.77) .. controls (5.37, 3.17) and (4.65, 2.96) .. (3.95, 2.96);
    \draw (3.56, 2.96) .. controls (3.09, 2.96) and (2.61, 2.96) .. (2.14, 2.96);
    \draw (2.14, 2.96) .. controls (1.37, 2.96) and (0.53, 3.11) .. 
          (0.53, 3.77) .. controls (0.53, 4.37) and (1.25, 4.58) .. (1.94, 4.58);
    \draw (2.34, 4.58) .. controls (2.81, 4.58) and (3.28, 4.58) .. (3.76, 4.58);
  \end{scope}
  \begin{scope}[color=linkcolor1]
    \draw (2.14, 2.77) .. controls (2.14, 0.88) and (4.11, -0.26) .. 
          (6.18, -0.26) .. controls (8.57, -0.26) and (10.21, 2.04) .. 
          (10.21, 4.58) .. controls (10.21, 6.78) and (10.21, 9.42) .. 
          (8.60, 9.42) .. controls (7.71, 9.42) and (6.98, 8.70) .. (6.98, 7.81);
    \draw (6.98, 7.81) .. controls (6.98, 7.33) and (6.98, 6.86) .. (6.98, 6.39);
    \draw (6.98, 5.99) .. controls (6.98, 3.87) and (6.98, 1.35) .. 
          (5.37, 1.35) .. controls (4.48, 1.35) and (3.76, 2.07) .. (3.76, 2.96);
    \draw (3.76, 2.96) .. controls (3.76, 3.44) and (3.76, 3.91) .. (3.76, 4.38);
    \draw (3.76, 4.78) .. controls (3.76, 5.25) and (3.76, 5.72) .. (3.76, 6.19);
    \draw (3.76, 6.19) .. controls (3.76, 7.45) and (5.32, 7.81) .. (6.79, 7.81);
    \draw (7.18, 7.81) .. controls (7.87, 7.81) and (8.60, 7.60) .. 
          (8.60, 7.00) .. controls (8.60, 6.34) and (7.76, 6.19) .. (6.98, 6.19);
    \draw (6.98, 6.19) .. controls (5.97, 6.19) and (4.96, 6.19) .. (3.95, 6.19);
    \draw (3.56, 6.19) .. controls (2.74, 6.19) and (2.14, 5.44) .. (2.14, 4.58);
    \draw (2.14, 4.58) .. controls (2.14, 4.11) and (2.14, 3.63) .. (2.14, 3.16);
  \end{scope}
\end{tikzpicture}
\caption{Link L7a2. The orientation is chosen in such a way that the two components have linking number $-2$. Picture
drawn using \cite{snappy}.}\label{fig:l7a2}
\end{figure}

\subsection{Link L7a2}
The link L7a2, see Figure~\ref{fig:l7a2}, is a two-component link whose Seifert matrix is degenerate. Namely, according to LinkInfo \cite{linkinfo}
its Seifert matrix is
\[S=\begin{pmatrix}
0& 0& 0& -1& 0& 0& 0& 0& 0& 0& 0\\
0& 1& 0& 0& 0& 0& 0& -1& 0& 0& 0\\
0& -1& 1& 0& 0& 0& 0& 1& -1& 0& 0\\
0& 0& -1& 1& 0& 0& 0& 0& 0& 0& 0\\
0& 0& 0& -1& 1& 0& 0& 0& 1& -1& 0\\
0& 0& 0& 0& -1& 1& 0& 0& 0& 0& 0\\
0& 0& 0& 0& 0& -1& 1& 0& 0& 1& 0\\
0& 0& 0& 0& 0& 0& 0& 0& 1& 0& -1\\
0& 0& 0& 0& 0& 0& 0& 0& 0& 0& 0\\
0& 0& 0& 0& 0& 0& 0& 0& -1& 0& 1\\
0& 0& 0& 0& 0& 0& 0& 0& 0& 0& 0
\end{pmatrix}.\]
We have $\det(tS-S^T)=(3t^6-4t^5+3t^4)(t-1)$, and $3t^2-4t+3$ has two roots on $S^1\setminus\{1\}$, namely $\frac23\pm i\frac13\sqrt{5}$. The assumptions of Theorem~\ref{thm:main} are satisified.

The signature function is constant away from $z=\frac23\pm i\frac13\sqrt{5}$. Therefore, the limit signature at $1$ is equal to the signature at any point $z_0$ on $S^1\setminus\{1\}$, such that $\re z_0>\re z$.
With $z=\frac{4}{5}+i\frac{3}{5}$, the signature of the matrix $(1-z)S+(1-\overline{z})S^T$ can be computed using Sage \cite{sagemath}. It is equal to $1$.

The components of $L$ has linking number $-2$. Therefore, the linking matrix is $\begin{pmatrix} 2 & -2 \\ -2 & 2\end{pmatrix}$ and the small linking matrix is $(2)$. The signature of the small linking matrix
is $1$; this confirms Theorem~\ref{thm:main}.

\begin{figure}
\begin{tikzpicture}[line width=1.5, line cap=round, line join=round, scale=0.5]
  \begin{scope}[color=linkcolor0]
    \draw (3.24, 6.51) .. controls (1.67, 6.51) and (1.01, 4.68) .. 
          (1.01, 2.88) .. controls (1.01, 0.88) and (2.64, -0.75) .. 
          (4.64, -0.75) .. controls (6.46, -0.75) and (8.28, 0.06) .. (8.28, 1.67);
    \draw (8.28, 1.67) .. controls (8.28, 2.41) and (8.28, 3.15) .. (8.28, 3.90);
    \draw (8.28, 4.29) .. controls (8.28, 5.56) and (7.16, 6.51) .. (5.85, 6.51);
    \draw (5.85, 6.51) .. controls (5.11, 6.51) and (4.37, 6.51) .. (3.63, 6.51);
  \end{scope}
  \begin{scope}[color=linkcolor1]
    \draw (5.85, 4.09) .. controls (5.85, 2.79) and (6.81, 1.67) .. (8.08, 1.67);
    \draw (8.47, 1.67) .. controls (9.55, 1.67) and (10.70, 1.95) .. 
          (10.70, 2.88) .. controls (10.70, 3.88) and (9.43, 4.09) .. (8.28, 4.09);
    \draw (8.28, 4.09) .. controls (7.53, 4.09) and (6.79, 4.09) .. (6.05, 4.09);
    \draw (5.66, 4.09) .. controls (4.39, 4.09) and (3.43, 5.21) .. (3.43, 6.51);
    \draw (3.43, 6.51) .. controls (3.43, 7.67) and (3.65, 8.94) .. 
          (4.64, 8.94) .. controls (5.57, 8.94) and (5.85, 7.79) .. (5.85, 6.71);
    \draw (5.85, 6.32) .. controls (5.85, 5.58) and (5.85, 4.83) .. (5.85, 4.09);
  \end{scope}
\end{tikzpicture}
\caption{Link $L5a1$. Diagram drawn using SnapPy \cite{snappy}.}\label{fig:l5a1}
\end{figure}
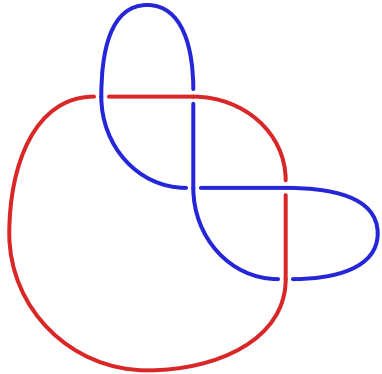
\subsection{L5a1 link}\label{sub:whiteh}
Consider the Whitehead link (L5a1 on the linkinfo list) depicted in Figure~\ref{fig:l5a1}.
This is a two-component link and the linking number of the two components is $0$. Thus, the linking matrix is the zero $2\times 2$ matrix and the
small linking matrix is $(0)$. In particular, the signature of the small linking matrix is zero.

According to LinkInfo \cite{linkinfo}, the Seifert matrix of L5a1 is
\[S = \begin{pmatrix}
1  & 0 & -1\\
-1 & 1 & 1\\
0  & 0 & -1
\end{pmatrix}.\]

The Alexander polynomial $\Delta(t) = \det(tS-S^T)=(t-1)^3$, therefore the assumptions of Theorem~\ref{thm:main} are not satisfied.

To calculate $\sigma^1$, we note that the signature function is constant away from the set of roots of the Alexander polynomial $\Delta$. As $\Delta$
has roots only at $t=1$, we infer that $\sigma^1=\sigma(-1)$. The latter signature is equal to
\[\sigma(-1)=\sign\begin{pmatrix} 2 & -1 & -1 \\ -1 & 2 & 1 \\ -1 & 1 & -2 \end{pmatrix}.\]
The above matrix has $2>0$ in the top left corner. The determinant of the top left $2\times 2$ minor is $3>0$ and the determinant of the whole $3\times 3$ matrix is $-8<0$. Hence,
$\sigma(-1)=1$ and so $\sigma^1=1$ is not equal to the signature of the small linking matrix. 
This example shows that the assumption of Theorem~\ref{thm:main} that $(t-1)^r$ does not divide the Alexander polynomial is necessary.

\bibliographystyle{plain}
\def\MR#1{}
\bibliography{research}

\end{document}